\newtheorem{theorem}{Theorem}[section]
\theoremstyle{definition}
\newtheorem{definition}[theorem]{Definition}
\newtheorem{corollary}[theorem]{Corollary}
\newtheorem{proposition}[theorem]{Proposition}
\theoremstyle{remark}
\newtheorem{remark}[theorem]{Remark}
\numberwithin{equation}{section}
\begin{document}

\title{HARMONIC COMPLEX STRUCTURES}
\footnote{Some parts of this paper was published with same title in
Chinese Ann. Math. Ser. A 30 (2009), no. 6, 761--764 }%

\author{}
\address{}
\curraddr{}
\thanks{}

\author{WAN JIANMING}
\address{Center of Mathematical Sciences
Zhejiang University Hangzhou,Zhejiang,310027, China}
\email{wanj\_m@yahoo.com.cn}
\thanks{}

\subjclass[2010]{Primary 54C15, 53C55; }

\date{}

\dedicatory{}

\keywords{harmonic complex structures}

\begin{abstract}
In this paper, we introduce a new concept so called harmonic complex
structure by using harmonic theory for vector bundle-valued
differential forms. It is a new structure intermediates between
complex structure and K\"{a}hler structure. From differential
geometric viewpoint, it is a natural generalization of K\"{a}hler
structure.

\end{abstract}

\maketitle

\section*{}

\specialsection*{}
%

\section{INTRODUCTION}
 The harmonic theories for vector bundle-valued
differential forms play an important role in harmonic maps and
Yang-Mills theories. The main idea is to construct some harmonic
theory for vector bundle-valued differential forms such that the
harmonic maps or Yang-Mills fields are exact the harmonic forms.
Motivated by these ideas, we find, since the almost complex
structure is a tangent bundle-valued 1-form, it is very natural to
use harmonic theory of vector bundle-valued differential forms to
study it. Though this is a very classical thing, until nowadays, we
do not find it elsewhere.

Let $M$ be an almost complex manifold. $J$ is an almost complex
structure of $M$, i.e. a smooth section of $\Gamma(T^{*}M\otimes
TM)$ such that $J^{2}=-1$ as a endomorphism $J: TM\longrightarrow
TM$. Given a Riemannian metric, we can define the Hodge-Laplace
operator $\Delta$ on $T^{*}M\otimes TM$. The harmonic complex
structure is defined by
$$\Delta J=0.$$ When $M$ is compact, an almost complex
structure $J$ is harmonic complex if and only if for all $X,
Y\in\Gamma(TM)$ $\nabla J(X, Y)=\nabla J(Y, X)$  and $ Trace\nabla
J=0$. Apparently, a K\"{a}hler structure must be harmonic complex.
The symmetry also implies that $J$ is integrable (Proposition 2.4).
This is the reason that we do not call it harmonic almost complex
structure. In fact, Wood [2] defined the harmonic almost complex
structure from the viewpoint of energy variation of fiber bundles.
But his definition do not imply the integrability. Similar to
harmonic maps, we can define the energy of almost complex
structures. We also get the Bochner type formula for harmonic
complex structures and give several applications.
Particularly, we
prove that $S^{6}$ with standard metric can not admit any harmonic
complex structure.

 Just as minimal submanifold
is the generalization of totally geodesic submanifold, harmonic
complex structure is the natural generalization of  K\"{a}hler
structure. We hope that harmonic complex structures can give some
new insights about K\"{a}hler structures and complex structures. We
also hope that it can arouse people's interesting as a new geometric
structure.

\textbf{Acknowledgement } I would like to thank Yin Fangliang for
introducing [2] to my attention.

\section{HARMONIC COMPLEX STRUCTURES}
This section will be separated in two parts. In first part we will
give a brief introduction of harmonic theories for tangent
bundle-valued differential forms. In the second part we discuss the
harmonic complex structures.

\subsection{Review of harmonic theories for
tangent bundle-valued differential forms}

We only work with tangent bundle-valued differential forms. For
general vector bundles and more details, we recommend the book of
Xin [3].

Let $(M, g)$ be a Riemannian manifold. $\nabla$ is the Levi-Civita
connection associated with $g$. Let $TM$ (resp. $T^{*}M$) denote the
tangent (resp. cotangent) bundle of $M$. We denote by
$\Gamma(\wedge^{p}T^{*}M\otimes TM)$ the set of tangent
bundle-valued $p$-forms over $M$. The Levi-Civita connection
$\nabla$ can be extended canonically to
$\Gamma(\wedge^{p}T^{*}M\otimes TM)$ by
$$(\nabla_{X}\omega)(X_{1}, ..., X_{p})$$ $$=\nabla_{X}(\omega(X_{1}, ..., X_{p}))-\sum_{j}
(\omega(X_{1}, ..., \nabla_{X}X_{j}, ..., X_{p})),$$ for any
$\omega\in\Gamma(\wedge^{p}T^{*}M\otimes TM)$ and $X, X_{1}, ...,
X_{p} \in\Gamma(TM)$.

We can define the differential operator $d :
\Gamma(\wedge^{p}T^{*}M\otimes TM) \longrightarrow
\Gamma(\wedge^{p+1}T^{*}M\otimes TM)$. For any
$\omega\in\Gamma(\wedge^{p}T^{*}M\otimes TM)$ and $X_{0}, X_{1},
..., X_{p} \in\Gamma(TM)$, $$d\omega(X_{0}, ...,
X_{p})=(-1)^{k}(\nabla_{X_{k}}\omega)(X_{0}, ..., \hat{X}_{k}, ...,
X_{p}),$$ where $\hat{X}_{k}$ denotes removing ${X}_{k}$. The
co-differential operator $\delta : \Gamma(\wedge^{p}T^{*}M\otimes
TM) \longrightarrow \Gamma(\wedge^{p-1}T^{*}M\otimes TM)$ is defined
by
$$\delta\omega(X_{1}, ..., X_{p-1})=-(\nabla_{e_{i}}\omega)(e_{i},
X_{1}, ..., X_{p-1}),$$ where $\{e_{i}\}$ is the local orthonormal
frame field.
\begin{remark}
It is easy to check that the above differential operator does not
satisfy $d^{2}=0$. So there is no Hodge theory for vector
bundle-valued differential forms.
\end{remark}

Now we can define Hodge-Laplace operator $$\Delta=d\delta+\delta
d.$$ If $\Delta\omega=0$, we say that $\omega$ is harmonic. Similar
to differential forms, we also have Bochner techniques for vector
bundle-valued differential forms, which play an important role in
harmonic map theories (see [3]). If $M$ is compact, from [3], we
know that $\Delta\omega=0$ if and only if $d\omega=0$ and
$\delta\omega=0$.
\subsection{Harmonic complex structures}

Let $M$ be an compact almost complex manifold. $J$ is the almost
complex structure of $M$.

\begin{definition}
We call that $J$ is a harmonic complex structure if $\Delta J=0$.
\end{definition}
\begin{remark}
By the definition of $d$ and $\delta$, $\Delta J=0$ if and only if
$\nabla J(X, Y)=\nabla J(Y, X)$ for all $X, Y\in\Gamma(TM)$ and $
Trace\nabla J=0$. Recall that a K\"{a}hler structure means a
hermitian complex structure $J$ such that $\nabla J=0$ (see [1]). We
know immediately that a K\"{a}hler structure must be a harmonic
complex structure.
\end{remark}

\begin{proposition}
A harmonic complex structure must be a complex structure.
\end{proposition}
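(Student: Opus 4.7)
The plan is to invoke the Newlander--Nirenberg theorem, so that it suffices to prove that the Nijenhuis tensor
\[
N_{J}(X,Y)=[JX,JY]-J[JX,Y]-J[X,JY]-[X,Y]
\]
vanishes identically on $M$. The input is exactly the symmetry of $\nabla J$ recorded in Remark~2.2: since $\Delta J=0$, we have $(\nabla_{X}J)Y=(\nabla_{Y}J)X$ for all vector fields $X,Y$. (The trace condition will not be used at this step.)

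First I would rewrite every Lie bracket that appears in $N_{J}$ using torsion-freeness of the Levi-Civita connection, $[A,B]=\nabla_{A}B-\nabla_{B}A$, and then pull $J$ past $\nabla$ via $\nabla_{A}(JB)=(\nabla_{A}J)B+J(\nabla_{A}B)$. Expanding each of the four terms this way and collecting, the pure $\nabla$-terms (those not involving $\nabla J$) cancel in pairs, leaving
\[
N_{J}(X,Y)=(\nabla_{JX}J)Y-(\nabla_{JY}J)X+J(\nabla_{Y}J)X-J(\nabla_{X}J)Y.
\]
This computation is routine bookkeeping and is the step I expect to consume the most space, but it is not conceptually hard.

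Next I would use the algebraic identity $(\nabla_{A}J)\circ J+J\circ(\nabla_{A}J)=0$, which follows by differentiating $J^{2}=-\mathrm{Id}$ and requires no metric assumption, to rewrite $(\nabla_{JX}J)Y$ and $(\nabla_{JY}J)X$. Together with the symmetry $(\nabla_{A}J)B=(\nabla_{B}J)A$ this gives
\[
(\nabla_{JX}J)Y=(\nabla_{Y}J)(JX)=-J(\nabla_{Y}J)X,\qquad (\nabla_{JY}J)X=-J(\nabla_{X}J)Y,
\]
so the four-term expression collapses to zero. Hence $N_{J}\equiv 0$ and $J$ is integrable by Newlander--Nirenberg, which is the conclusion.

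The only real obstacle is organizing the bracket expansion cleanly so the cancellations are transparent; everything else is a direct application of the symmetry from Remark~2.2 combined with $J^{2}=-\mathrm{Id}$. Compactness of $M$ is used only implicitly, insofar as it was needed to identify $\Delta J=0$ with the pair of conditions in Remark~2.2.
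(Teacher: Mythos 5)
Your proof is correct and follows essentially the same route as the paper: both reduce to the Newlander--Nirenberg theorem by showing that $dJ=0$ (equivalently, the symmetry of $\nabla J$) forces the Nijenhuis tensor to vanish. The paper packages the computation as the identity $dJ(X,Y)-dJ(JX,JY)=N(J)(X,Y)$, while you express $N_J$ directly in terms of $\nabla J$ and then apply the symmetry together with $(\nabla_A J)J+J(\nabla_A J)=0$; these are the same calculation organized differently, and your intermediate formula for $N_J$ checks out.
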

\begin{proof}
From the definition of $dJ$, for any $X, Y\in\Gamma(TM)$, we have
\begin{eqnarray*}
dJ(X, Y) & = & (\nabla_{X}J)(Y)-(\nabla_{Y}J)(X)\\
         & = & \nabla_{X}JY-J(\nabla_{X}Y)-\nabla_{Y}JX+J(\nabla_{Y}X)\\
         & = & [X, JY]+\nabla_{JY}X-[Y, JX]-\nabla_{JX}Y-J[X,Y]\\
\end{eqnarray*}
and
\begin{eqnarray*}
dJ(JX, JY) & = & (\nabla_{JX}J)(JY)-(\nabla_{JY}J)(JX)\\
           & = & -\nabla_{JX}Y-J(\nabla_{JX}JY)+\nabla_{JY}X+J(\nabla_{JY}JX)\\
           & = & \nabla_{JY}X-\nabla_{JX}Y-J[JX, JY].\\
\end{eqnarray*}
Hence we have $$dJ(X,Y)-dJ(JX, JY)=N(J)(X, Y),$$ where $N$ is the
Nijenhuis tensor
$$N(J)(X, Y)=[JX, Y]+[X, JY]+J[JX, JY]-J[X, Y].$$ Recall that $\Delta
J=0$ implies $dJ=0$, by Newlander-Nirenberg theorem (see [1]), we
get the proposition.

\end{proof}

 If a Riemannian metric is $J$ invariant, we call it
almost-hermitian. A nearly K\"{a}hler manifold is an
almost-hermitian manifold such that $(\nabla_{X}J)(X)=0$ for any
$X\in\Gamma(TM)$. The following proposition shows that harmonic
complex structures measure the difference between nearly K\"{a}hler
and K\"{a}hler exactly.

\begin{proposition}
If $M$ is almost-hermitian corresponding $J$. Then $M$ is K\"{a}hler
if and only if $J$ is a harmonic complex structure and $M$ is nearly
K\"{a}hler corresponding $J$.
\end{proposition}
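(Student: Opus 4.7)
The plan is to reduce both directions to algebraic manipulations of the covariant derivative $\nabla J$, viewed as a $(1,2)$-tensor $(X,Y) \mapsto (\nabla_X J)(Y)$.

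The forward direction is immediate: a K\"ahler structure satisfies $\nabla J = 0$, so trivially $\Delta J = 0$ and $(\nabla_X J)(X) = 0$ for all $X$.

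For the converse, I would first translate the two hypotheses into tensorial identities on $\nabla J$. By the computation already carried out in the proof of Proposition~2.4 we have $dJ(X,Y) = (\nabla_X J)(Y) - (\nabla_Y J)(X)$, so the harmonic complex condition (which on a compact manifold forces $dJ = 0$, as recalled at the end of Section~2.1) is equivalent to the symmetry
\[
(\nabla_X J)(Y) = (\nabla_Y J)(X) \quad \text{for all } X, Y \in \Gamma(TM).
\]
The nearly K\"ahler hypothesis $(\nabla_X J)(X) = 0$ is a quadratic-form condition in $X$; I would polarize it (replacing $X$ by $X+Y$ and using the $\mathbb{R}$-bilinearity of $\nabla J$) to obtain the skew-symmetry
\[
(\nabla_X J)(Y) + (\nabla_Y J)(X) = 0.
\]

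Adding the two identities forces $2(\nabla_X J)(Y) = 0$, hence $\nabla J = 0$. Together with the almost-hermitian hypothesis, this is exactly the K\"ahler condition, completing the proof. There is no real obstacle here: the trace-free part $\mathrm{Trace}\,\nabla J = 0$ of the harmonic condition is not even needed (and, incidentally, is automatic under the nearly K\"ahler assumption); the content of the statement is entirely captured by the tension between the symmetric part of $\nabla J$ (killed by $dJ = 0$) and its skew part (killed by the nearly K\"ahler condition).
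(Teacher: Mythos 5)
Your proof is correct and follows essentially the same route as the paper: the paper likewise polarizes the nearly K\"ahler condition to get skew-symmetry of $(X,Y)\mapsto(\nabla_X J)(Y)$, combines it with the symmetry coming from $dJ=0$, and concludes $\nabla J=0$. Your write-up just makes the polarization step and the (correct) observation that the trace condition is not needed more explicit.
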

\begin{proof}
The nearly  K\"{a}hler implies $(\nabla_{X}J)(Y)=-(\nabla_{Y}J)(X)$.
Combining with $dJ(X, Y)=0$, one gets $\nabla J=0$. This shows that
$M$ is  K\"{a}hler. The contrary is trivial.
\end{proof}

\section{BOCHNER TYPE FORMULA FOR HARMONIC COMPLEX STRUCTURES AND APPLICATIONS}

Similar to the differential forms, we also have the following
Weitzenb\"{o}ck formula for tangent bundle-valued differential
forms.
\begin{proposition}
([3]) For any tangent bundle-valued $p$-form $\omega$, we have
$$\Delta\omega=-\nabla^{2}\omega+S,$$ where
$\nabla^{2}\omega=\nabla_{e_{i}}\nabla_{e_{i}}\omega-\nabla_{\nabla_{e_{i}e_{i}}}\omega$
and $$S(X_{1}, ..., X_{p})=(-1)^{k}(R(e_{i}, X_{k})\omega)(e_{i},
X_{1}, ..., \hat{X_{k}}, ..., X_{p}),$$ for any $X_{1}, ...,
X_{p}\in\Gamma(TM)$. $R$ is the curvature tensor $R(X,
Y)=-\nabla_{X}\nabla_{Y}+\nabla_{Y}\nabla_{X}+\nabla_{[X, Y]}$ and
$\{e_{i}\}$ is the local orthonormal frame field.
\end{proposition}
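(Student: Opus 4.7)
The plan is to compute $\Delta\omega=d\delta\omega+\delta d\omega$ directly from the defining formulas for $d$ and $\delta$ given in Section 2.1 and package the result as rough-Laplacian plus curvature. To keep the bookkeeping manageable I would work pointwise: fix $p\in M$ and choose the local orthonormal frame $\{e_i\}$ to be a normal frame at $p$, so that $\nabla_{e_i}e_j=0$ at $p$, and extend the test vectors $X_1,\ldots,X_p$ so that $\nabla_{e_i}X_j(p)=0$ as well. With these choices every covariant derivative at $p$ becomes an ordinary directional derivative on coefficients, and the correction term $\nabla_{\nabla_{e_i}e_i}\omega$ in the definition of $\nabla^2\omega$ drops out at $p$.

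First I would expand $\delta d\omega(X_1,\ldots,X_p)$. Substituting the formula for $d\omega$ into the definition of $\delta$, the ``$k=0$'' term produces exactly the diagonal second derivative $-(\nabla_{e_i}\nabla_{e_i}\omega)(X_1,\ldots,X_p)$, which is the $-\nabla^2\omega$ contribution. The remaining $k=1,\ldots,p$ terms yield the off-diagonal mixed derivatives $(-1)^{k}(\nabla_{e_i}\nabla_{X_k}\omega)(e_i,X_1,\ldots,\hat X_k,\ldots,X_p)$.

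Next I would expand $d\delta\omega(X_1,\ldots,X_p)$ in the same frame; it consists only of mixed derivatives of the form $(-1)^{k-1}(\nabla_{X_k}\nabla_{e_i}\omega)(e_i,X_1,\ldots,\hat X_k,\ldots,X_p)$, which have the opposite sign of the off-diagonal terms just produced by $\delta d$. Adding, the second-order parts cancel pairwise and leave behind commutators $[\nabla_{e_i},\nabla_{X_k}]$. The curvature identity
$$\nabla_{e_i}\nabla_{X_k}\omega-\nabla_{X_k}\nabla_{e_i}\omega=-R(e_i,X_k)\omega+\nabla_{[e_i,X_k]}\omega,$$
which follows from the definition of $R$ stated in the proposition, then takes over; the bracket term vanishes at $p$ for the chosen extensions, so the commutator contribution collapses to $(-1)^{k}(R(e_i,X_k)\omega)(e_i,X_1,\ldots,\hat X_k,\ldots,X_p)=S(X_1,\ldots,X_p)$. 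This gives the claimed formula at $p$, and since $p$ is arbitrary and both sides are tensorial, it holds globally.

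The main obstacle is not a deep conceptual issue but the sign and index bookkeeping: one must track the alternating $(-1)^k$ factors as they propagate from $d$ into each of $d\delta$ and $\delta d$, and verify that every second-derivative term apart from the diagonal $\nabla_{e_i}\nabla_{e_i}\omega$ participates in exactly one cancellation-up-to-curvature with its partner. Once the normal-frame reduction is in place, the rest is a patient expansion, and the result is a direct analogue of the classical Weitzenb\"ock formula for $\wedge^{p}T^{*}M$-valued forms, recorded in this generality in Xin~[3].
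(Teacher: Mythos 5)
The paper does not actually prove this proposition: it is quoted verbatim from Xin [3], so there is no in-text argument to compare yours against. Your strategy --- expand $\delta d\omega$ and $d\delta\omega$ in a normal frame at a point $p$, with the test fields extended so that their covariant derivatives vanish there, isolate the diagonal term $-\nabla_{e_i}\nabla_{e_i}\omega$ from the $k=0$ slot of $\delta d\omega$, and convert the remaining mixed second derivatives into the curvature term $S$ via the Ricci identity --- is the standard Weitzenb\"{o}ck computation, and it does establish the formula with the paper's conventions. So the route is sound, and writing it out is already more than the paper does.

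One concrete correction before you commit it to paper: your intermediate signs do not combine to give the conclusion you state. The definition $\delta\eta(X_1,\dots,X_{p-1})=-(\nabla_{e_i}\eta)(e_i,X_1,\dots,X_{p-1})$ carries an overall minus sign which you apply to the $k=0$ term (correctly producing $-(\nabla_{e_i}\nabla_{e_i}\omega)(X_1,\dots,X_p)$) but drop from the $k\geq 1$ terms of $\delta d\omega$ and again from $d\delta\omega$. The correct off-diagonal contributions at $p$ are $(-1)^{k+1}(\nabla_{e_i}\nabla_{X_k}\omega)(e_i,\dots,\hat X_k,\dots)$ from $\delta d$ and $(-1)^{k}(\nabla_{X_k}\nabla_{e_i}\omega)(e_i,\dots,\hat X_k,\dots)$ from $d\delta$; their sum is $(-1)^{k}\bigl(\nabla_{X_k}\nabla_{e_i}-\nabla_{e_i}\nabla_{X_k}\bigr)\omega(e_i,\dots)=(-1)^{k}\bigl(R(e_i,X_k)\omega\bigr)(e_i,\dots)$, the summand of $S$. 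With the signs exactly as you wrote them, the same combination together with your (correct) commutator identity yields $-S$, contradicting your final line. This is a bookkeeping slip rather than a missing idea, but it is precisely the hazard you flagged, so it must be repaired in the full write-up. A minor further point: expressions such as $(\nabla_{e_i}d\omega)(e_i,\dots)$, where the summed frame index is both the direction of differentiation and an argument, are most cleanly handled by observing that $\delta\omega$ is a contraction of $\nabla\omega$ and that covariant differentiation commutes with contraction; every term then becomes the tensorial second derivative $\nabla^2_{X,Y}=\nabla_X\nabla_Y-\nabla_{\nabla_XY}$ and the normal-frame simplifications are automatic.
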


Now we use proposition 3.1 to deduce Bochner type formula for
harmonic complex structures. Let $\{e_{i}\}$ is the local
orthonormal frame field. We can define the energy density of an
almost complex structure $J$ by
$$e(J)=\frac{1}{2}<Je_{i}, Je_{i}>.$$ Obviously
$e(J)$ is independent on the choosing of $\{e_{i}\}$.
If $J$ is a harmonic complex
structure, we have following Bochner type formula.
\begin{proposition}
$\Delta e(J)=|\nabla J|^{2}-<R(e_{i}, e_{j})Je_{i},
Je_{j}>+<JR(e_{i}, e_{j})e_{i}, Je_{j}>$, where $|\nabla
J|^{2}=|(\nabla _{e_{i}}J)(e_{j})|^{2}$.
\end{proposition}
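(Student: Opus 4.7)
The plan is to apply the Weitzenb\"{o}ck formula of Proposition 3.1 to $J$ itself, viewed as a tangent bundle-valued $1$-form, and then express $\Delta e(J) = \tfrac{1}{2}\Delta\langle J, J\rangle$ in terms of $\langle \nabla^{2}J, J\rangle$ and $|\nabla J|^{2}$. Since $\Delta J = 0$ by assumption, Proposition 3.1 immediately gives $\nabla^{2}J = S(J)$, so everything reduces to computing $\langle S(J), J\rangle$ and combining it with the $|\nabla J|^{2}$ term that arises from differentiating $|J|^{2}$ twice.

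For the analytic side, I would fix a point $p$ and work with an orthonormal frame $\{e_{i}\}$ that is geodesic at $p$, so that $\nabla_{e_{i}}e_{j}(p) = 0$ and the Christoffel symbols drop out of the computation. Two applications of the product rule for the natural inner product on $T^{*}M \otimes TM$ to $|J|^{2} = \langle Je_{i}, Je_{i}\rangle$ yield a standard Bochner-type identity of the shape
$$\tfrac{1}{2}\Delta |J|^{2} = -\langle \nabla^{2}J, J\rangle + |\nabla J|^{2}$$
at $p$ (with the Laplacian sign convention $\Delta = d\delta + \delta d$ used in the paper). Substituting $\nabla^{2}J = S(J)$ converts the right-hand side into an expression involving $\langle S(J), J\rangle$ and $|\nabla J|^{2}$.

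For the curvature side, I would unwind $S(J)$ from its defining formula. For the $TM$-valued $1$-form $J$ it specializes to $S(X) = -(R(e_{i}, X)J)(e_{i})$, where the curvature $R$ of the induced connection on $T^{*}M \otimes TM$ acts on both the form slot and the value slot of $J$:
$$(R(Y, Z)J)(X) = R(Y, Z)(JX) - J(R(Y, Z)X).$$
Pairing with $Je_{j}$ and summing over $j$ gives
$$\langle S(J), J\rangle = \sum_{i,j}\bigl[-\langle R(e_{i}, e_{j})Je_{i}, Je_{j}\rangle + \langle JR(e_{i}, e_{j})e_{i}, Je_{j}\rangle\bigr],$$
which, when inserted into the previous identity, produces precisely the two curvature terms appearing in the claimed formula.

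The main obstacle is careful sign bookkeeping. One has to keep track of two sign conventions simultaneously: the paper's Hodge-type convention $\Delta = d\delta + \delta d$ when applied to scalar functions, and the paper's curvature convention $R(X, Y) = -\nabla_{X}\nabla_{Y} + \nabla_{Y}\nabla_{X} + \nabla_{[X, Y]}$, which is the negative of the standard Riemann tensor. Once these are resolved consistently, the remaining computation is a purely algebraic frame calculation and contains no substantive difficulty.
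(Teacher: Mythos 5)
Your proposal follows exactly the paper's proof: apply the Weitzenb\"{o}ck formula of Proposition 3.1 to $J$, compute $\langle S, J\rangle$ by letting the curvature of the induced connection act on both slots of $J$ (which gives $-S(X)=R(e_i,X)Je_i - JR(e_i,X)e_i$), and combine with the Bochner identity for $\tfrac{1}{2}\Delta|J|^2$ in a normal frame. The one thing to repair is the sign in your intermediate identity: with the convention the paper actually uses at this step, namely $\Delta e(J)=\tfrac{1}{2}e_ie_i\langle J,J\rangle$, one has $\tfrac{1}{2}\Delta|J|^2=\langle\nabla^{2}J,J\rangle+|\nabla J|^{2}$ rather than $-\langle\nabla^{2}J,J\rangle+|\nabla J|^{2}$, and it is this version which, after substituting $\nabla^{2}J=S$, produces the stated signs on the two curvature terms (your version as written would flip them).
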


\begin{proof}
 First we have
\begin{eqnarray*}
-S(X) & = & (R(e_{i}, X)J)e_{i}\\
      & = &((-\nabla_{e_{i}}\nabla_{X}+\nabla_{X}\nabla_{e_{i}}+\nabla_{[e_{i},X]})J)e_{i}\\
      & = & -\nabla_{e_{i}}((\nabla_{X}J)e_{i})+(\nabla_{X}J)\nabla_{e_{i}}e_{i}+\nabla_{X}((\nabla_{e_{i}}J)e_{i})\\
      & & -(\nabla_{e_{i}}J)\nabla_{X}e_{i}+ \nabla_{[e_{i},X]}Je_{i}-J\nabla_{[e_{i},X]}e_{i}\\
      & = & -\nabla_{e_{i}}(\nabla_{X}Je_{i}-J(\nabla_{X}e_{i}))+
      \nabla_{X}(J\nabla_{e_{i}}e_{i})-J(\nabla_{X}\nabla_{e_{i}}e_{i})\\
      & & +
      \nabla_{X}(\nabla_{e_{i}}Je_{i}-J(\nabla_{e_{i}}e_{i}))
      -\nabla_{e_{i}}(J\nabla_{X}e_{i})+J(\nabla_{e_{i}}\nabla_{X}e_{i})\\
      & & +\nabla_{[e_{i}, X]}Je_{i}-J\nabla_{[e_{i}, X]}e_{i}\\
      & = & R(e_{i}, X)Je_{i}-JR(e_{i}, X)e_{i}.\\
\end{eqnarray*}
 Then
$$<S, J>=-<R(e_{i}, e_{j})Je_{i}, Je_{j}>+<J(R(e_{i}, e_{j})e_{i}),Je_{j}>,$$
and
\begin{eqnarray*}
<\nabla^{2}J, J> & = & <\nabla_{e_{i}}\nabla_{e_{i}}J, J>=e_{i}<\nabla_{e_{i}}J, J>-<\nabla_{e_{i}}J, \nabla_{e_{i}}J>\\
                 & = & \frac{1}{2}e_{i}e_{i}<J, J>-|\nabla J|^{2}=\Delta e(J)-|\nabla J|^{2},\\
 \end{eqnarray*}
here we choose the normal frame field (i.e.
$\nabla_{e_{i}}e_{j}|_{p}=0$ for a fixed point $p$). By
Weizenb\"{o}ck formula,
$$0=<\Delta J, J>=-<\nabla^{2}J, J>+<S, J>,$$ we get the formula.
\end{proof}

We hope that proposition 3.2 contributes to studying of complex
structures and K\"{a}hler structures.

\begin{remark}
If $J$ is only an almost complex structure, from the last step of
proof of proposition 3.2, we have $$\Delta e(J)+<\Delta J,
J>=|\nabla J|^{2}-<R(e_{i}, e_{j})Je_{i}, Je_{j}>+<JR(e_{i},
e_{j})e_{i}, Je_{j}>.$$
\end{remark}

\begin{corollary}
For a compact almost complex manifold, $J$ is a harmonic complex
structure if and only if $\int_{M}(|\nabla J|^{2}-<R(e_{i},
e_{j})Je_{i}, Je_{j}>+<JR(e_{i}, e_{j})e_{i}, Je_{j}>)dv=0$.
\end{corollary}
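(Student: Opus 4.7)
The plan is to leverage the identity stated in Remark 3.1, which is valid for \emph{any} almost complex structure $J$:
$$\Delta e(J)+\langle\Delta J, J\rangle=|\nabla J|^{2}-\langle R(e_{i}, e_{j})Je_{i}, Je_{j}\rangle+\langle JR(e_{i}, e_{j})e_{i}, Je_{j}\rangle.$$
Integrating this equation over $M$ kills the $\Delta e(J)$ term, since $\int_{M}\Delta f\,dv = 0$ for every smooth function on a closed Riemannian manifold. Thus
$$\int_{M}\langle\Delta J, J\rangle\,dv=\int_{M}\bigl(|\nabla J|^{2}-\langle R(e_{i}, e_{j})Je_{i}, Je_{j}\rangle+\langle JR(e_{i}, e_{j})e_{i}, Je_{j}\rangle\bigr)dv.$$
The forward direction of the corollary is then immediate: if $\Delta J=0$, the left side vanishes, so the right side does too.

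For the converse, the key is to rewrite $\int_{M}\langle\Delta J, J\rangle\,dv$ using the formal adjointness of $d$ and $\delta$ on tangent bundle-valued forms over a compact manifold --- the same adjointness invoked in Subsection 2.1 to assert that $\Delta\omega=0$ is equivalent to $d\omega=0$ and $\delta\omega=0$. This yields
$$\int_{M}\langle\Delta J, J\rangle\,dv=\int_{M}\langle(d\delta+\delta d)J, J\rangle\,dv=\int_{M}\bigl(|\delta J|^{2}+|dJ|^{2}\bigr)dv.$$
Since the integrand is pointwise nonnegative, vanishing of the left side forces $dJ=0$ and $\delta J=0$ everywhere, and hence $\Delta J=0$.

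The argument is really just a packaging of the Bochner-type identity plus Stokes, so there is no serious obstacle. The only nontrivial ingredient is the integration-by-parts identity making $d$ and $\delta$ formal adjoints on tangent bundle-valued forms on a closed manifold; this is standard and is exactly what underlies the characterization of harmonicity quoted in Subsection 2.1, so I would simply cite it rather than redevelop it.
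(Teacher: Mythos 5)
Your proof is correct and follows exactly the route the paper intends: the corollary is stated without proof as an immediate consequence of the Remark following Proposition 3.2, and your argument --- integrate that identity, drop $\int_M \Delta e(J)\,dv=0$ by Stokes, and identify $\int_M\langle\Delta J,J\rangle\,dv=\int_M(|dJ|^2+|\delta J|^2)\,dv\ge 0$ via the formal adjointness of $d$ and $\delta$ already invoked in Subsection 2.1 --- is precisely the standard packaging. No gaps.
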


\begin{corollary}
If $M$ admits a Hermitian harmonic complex structure, then the scale
curvature $\leq <R(e_{i}, e_{j})Je_{i}, Je_{j}>$. The equal holds if
and only if $M$ is K\"{a}hler.
\end{corollary}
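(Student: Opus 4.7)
The strategy is to feed the integral identity of Corollary 3.3 with the extra hypothesis that $J$ is an isometry of $g$, recognize one of the two curvature contractions as the scalar curvature, and read off the inequality.

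First I would isolate $|\nabla J|^2$ in Corollary 3.3, obtaining
$$\int_{M}|\nabla J|^{2}\,dv \;=\; \int_{M}\bigl(<R(e_{i},e_{j})Je_{i},Je_{j}> - <JR(e_{i},e_{j})e_{i},Je_{j}>\bigr)\,dv.$$
Since $g$ is Hermitian with respect to $J$, i.e., $<JX,JY>=<X,Y>$ for all $X,Y$, the second integrand collapses to $<R(e_{i},e_{j})e_{i},e_{j}>$. The next step is to check that, with the sign convention $R(X,Y)=-\nabla_{X}\nabla_{Y}+\nabla_{Y}\nabla_{X}+\nabla_{[X,Y]}$ used in Proposition 3.1, the trace $\sum_{i,j}<R(e_{i},e_{j})e_{i},e_{j}>$ equals the scalar curvature $s$ of $(M,g)$; a one-line computation on the round sphere confirms the sign.

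Combining these observations gives
$$\int_{M}|\nabla J|^{2}\,dv \;=\; \int_{M}\bigl(<R(e_{i},e_{j})Je_{i},Je_{j}> - s\bigr)\,dv,$$
and the non-negativity of the left-hand side yields the asserted inequality. Equality forces $|\nabla J|^{2}\equiv 0$ on $M$, hence $\nabla J\equiv 0$, which, together with the Hermitian hypothesis, is the definition of K\"ahler recalled in Remark 2.3; the converse direction is immediate since a K\"ahler manifold satisfies $\nabla J = 0$.

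The only real subtlety lies in the bookkeeping around the curvature-tensor sign convention of Proposition 3.1 (so that the contraction $<R(e_{i},e_{j})e_{i},e_{j}>$ comes out as $+s$ rather than $-s$) and in invoking the Hermitian hypothesis at exactly the right place to move $J$ across the inner product. Everything else is a rearrangement of the identity supplied by Corollary 3.3.
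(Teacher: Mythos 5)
Your overall strategy is sound and most of the bookkeeping (the sign convention giving $\sum_{i,j}<R(e_{i},e_{j})e_{i},e_{j}>=+s$, the use of $<JX,JY>=<X,Y>$ to collapse the second curvature term, and the equality case via $\nabla J\equiv 0$ together with Remark 2.3) is correct. However, by routing the argument through the integrated identity of Corollary 3.3 you only obtain
$$\int_{M}s\,dv\;\leq\;\int_{M}<R(e_{i},e_{j})Je_{i},Je_{j}>\,dv,$$
whereas the corollary as stated asserts the \emph{pointwise} inequality $s\leq <R(e_{i},e_{j})Je_{i},Je_{j}>$. An integral inequality does not imply a pointwise one, so as written your argument proves a strictly weaker statement.

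The missing observation is that the Hermitian hypothesis does more than let you move $J$ across the inner product: it makes the energy density constant. Indeed $e(J)=\frac{1}{2}<Je_{i},Je_{i}>=\frac{1}{2}<e_{i},e_{i}>=\frac{n}{2}$, so $\Delta e(J)=0$ at every point. Feeding this into Proposition 3.2 (rather than its integrated consequence) gives the pointwise identity
$$0=|\nabla J|^{2}-<R(e_{i},e_{j})Je_{i},Je_{j}>+s,$$
from which $s=<R(e_{i},e_{j})Je_{i},Je_{j}>-|\nabla J|^{2}\leq <R(e_{i},e_{j})Je_{i},Je_{j}>$ pointwise, with equality everywhere exactly when $\nabla J\equiv 0$, i.e. when $M$ is K\"ahler. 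Your treatment of the equality case and of the converse then goes through unchanged; the fix is one line, but it is the line that carries the theorem.
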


Though we do not know whether $S^{6}$ has a complex structure, as an
application of proposition 3.2, we have
\begin{theorem}
$S^{6}$ with standard metric can not admit any harmonic complex
structure.
\end{theorem}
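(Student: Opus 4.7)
The plan is to apply the integrated Bochner identity from Corollary 3.4 on $S^6$, compute each of the three integrands using the constant-curvature form of the curvature tensor, and show that the integrand is pointwise strictly positive, yielding a contradiction with $\int = 0$.

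First I would evaluate the two curvature contractions at a point in a local orthonormal frame $\{e_i\}$. On $S^6$ with the paper's sign convention, the curvature takes the form $R(X,Y)Z = \langle X,Z\rangle Y - \langle Y,Z\rangle X$. Writing $J_{ij}=\langle Je_i,e_j\rangle$, the first curvature term becomes
\begin{equation*}
\langle R(e_i,e_j)Je_i,Je_j\rangle \;=\; J_{ii}J_{jj}-J_{ij}J_{ji}\;=\;(\mathrm{tr}\,J)^2-\mathrm{tr}(J^2).
\end{equation*}
Since $J^2=-\mathrm{id}$ on a real vector bundle in even rank, the eigenvalues $\pm i$ occur with equal multiplicity, so $\mathrm{tr}\,J=0$, and $\mathrm{tr}(J^2)=-n=-6$. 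Hence this term equals $n=6$. For the second term, $\sum_i R(e_i,e_j)e_i = (n-1)e_j = 5e_j$, so
\begin{equation*}
\langle JR(e_i,e_j)e_i,Je_j\rangle \;=\; (n-1)\,|J|^2 \;=\; 5\,|J|^2.
\end{equation*}
Thus on $S^6$ the integrand in Corollary 3.4 reduces pointwise to $|\nabla J|^2 - 6 + 5|J|^2$.

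Next I need a lower bound for $|J|^2$. Since $\det(J)^2=\det(-I)=1$, we have $|\det J|=1$. Applying the arithmetic–geometric mean inequality to the (positive) eigenvalues of the symmetric matrix $J^{T}J$ gives
\begin{equation*}
|J|^2 \;=\; \mathrm{tr}(J^{T}J)\;\ge\; n\,\bigl(\det(J^{T}J)\bigr)^{1/n}\;=\;n\;=\;6,
\end{equation*}
with equality iff $J$ is orthogonal (i.e.\ Hermitian with respect to the round metric). Consequently $-6+5|J|^2\ge 24$ pointwise, and together with $|\nabla J|^2\ge 0$ the integrand is bounded below by $24>0$ everywhere. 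If $J$ were harmonic complex, Corollary 3.4 would force this strictly positive integrand to have vanishing integral on the compact manifold $S^6$, which is absurd.

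I expect the main obstacle to be not the Bochner computation itself but the linear-algebra lemma $|J|^2\ge n$ for any $J$ with $J^2=-I$: without it, for a non-Hermitian candidate $J$ the pointwise lower bound is not obvious, and one might mistakenly try to work only in the Hermitian case (where $|J|^2=n$ is immediate and the argument becomes purely pointwise, since $e(J)$ is constant). Once the AM–GM bound is in hand the argument closes cleanly; the rest is bookkeeping of the constant-curvature formula and tracking the sign convention for $R$ used in the paper.
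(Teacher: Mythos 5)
Your proof is correct and follows essentially the same route as the paper: apply the integrated Bochner identity of Corollary 3.4, evaluate the two curvature contractions on the round sphere to get $6$ and a positive multiple of $|J|^2$, and combine $\mathrm{tr}\,J=0$ with the pointwise bound $|J|^2\ge 6$ (you via AM--GM using $\det(J^{T}J)=1$, the paper via an entrywise estimate on the diagonal of $J^{2}=-I$) to make the integrand strictly positive, contradicting the vanishing integral. Incidentally, your coefficient $5$ in $\sum_i R(e_i,e_j)e_i=5e_j$ is the correct one (the paper's displayed factor $6$ is a harmless slip), and neither value affects the contradiction.
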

\begin{proof}
If on the contrary, $J$ is a harmonic complex structure. Locally, we
can write $Je_{i}=J^{k}_{i}e_{k}$. Under the standard metric, the
curvature on $S^{6}$ can be written as
$R_{ijkm}=\delta_{ik}\delta_{jm}-\delta_{jk}\delta_{im}$. Then
\begin{eqnarray*}
<JR(e_{i}, e_{j})e_{i}, Je_{j}> & = & <J(R^{k}_{iji}e_{k}),Je_{j}>\\
                                & = & R^{k}_{iji}<Je_{k},Je_{j}>=R_{ijik}J^{m}_{k}J^{m}_{j}\\
                                & = & R_{ijij}(J^{m}_{j})^{2}=\sum_{m}(J^{m}_{j})^{2}\\
 \end{eqnarray*}
and
\begin{eqnarray*}
<R(e_{i}, e_{j})Je_{i}, Je_{j}> & = & <J^{k}_{i}R^{l}_{ijk}e_{l},J^{m}_{j}e_{m}> \\
                                & = & J^{k}_{i}J^{m}_{j}R_{ijkm}=J^{i}_{i}J^{j}_{j}-J^{j}_{i}J^{i}_{j}.\\
 \end{eqnarray*}
Since $J$ is almost complex structure, we have
$traceJ=\sum_{i}J^{i}_{i}=0$ and
$\delta^{j}_{i}=|-\delta^{j}_{i}|=|\sum_{k}J^{k}_{i}J^{j}_{k}|\leq
\sum_{k}\frac{(J^{k}_{i})^{2}+(J^{j}_{k})^{2}}{2}$. So $$\sum_{i,
j}<JR(e_{i}, e_{j})e_{i}, Je_{j}>=6\sum_{i, j}(J^{j}_{i})^{2}>6$$
and
$$\sum_{i, j}<R(e_{i}, e_{j})Je_{i},
Je_{j}>=(\sum_{i}J^{i}_{i})^{2}+6=6.$$ Which is a contradiction to
corollary 3.4.

\end{proof}

\begin{corollary}
 The standard metric on $S^{6}$ with small perturbation
still can not admit any harmonic complex structure.
\end{corollary}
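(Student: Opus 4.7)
The plan is to derive Corollary 3.7 by a perturbation argument anchored on the proof of Theorem 3.6. The critical observation is that the contradiction in Theorem 3.6 comes from pointwise positivity of the integrand in Corollary 3.4, not merely from vanishing of the integral; such pointwise positivity is robust under small $C^{2}$ perturbations of the metric.

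First I would revisit the calculation in Theorem 3.6 to extract a quantitative margin. On $(S^{6},g_{0})$, in any $g_{0}$-orthonormal frame, the two curvature sums computed there satisfy
\[\sum_{i,j}\langle JR(e_{i},e_{j})e_{i},Je_{j}\rangle - \sum_{i,j}\langle R(e_{i},e_{j})Je_{i},Je_{j}\rangle = 6|J|^{2}-6,\]
and the elementary inequality $1=|\!-\!\delta^{i}_{i}|\leq\sum_{k}\tfrac{(J^{k}_{i})^{2}+(J^{i}_{k})^{2}}{2}$ summed over $i$ (used in the proof of Theorem 3.6) gives $|J|^{2}\geq 6$ pointwise. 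Hence this curvature combination is pointwise at least $30$, independently of the almost complex structure $J$.

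Now let $g$ be a metric on $S^{6}$ with $\|g-g_{0}\|_{C^{2}}<\epsilon$. Since the Riemann tensor depends smoothly on the $2$-jet of $g$, we have $\|R_{g}-R_{g_{0}}\|_{C^{0}}\leq C\epsilon$ for a constant $C$ independent of $g$ and $J$. Expressing both curvature sums in a $g$-orthonormal frame, each sum differs from its standard-metric value by at most $C'\epsilon\cdot|J|^{2}_{g}$, while $|J|^{2}_{g}\geq 6$ still holds by the same algebraic argument. Therefore the curvature contribution to the integrand of Corollary 3.4 is pointwise bounded below by
\[(6-2C'\epsilon)|J|^{2}_{g}-6-C''\epsilon,\]
which, for $\epsilon$ small enough and uniformly in $J$, is strictly positive (indeed bounded below by an absolute positive constant). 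Combined with $|\nabla J|^{2}\geq 0$, the full integrand of Corollary 3.4 is pointwise positive, making the integral strictly positive and contradicting the existence of a harmonic complex structure.

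The main obstacle is uniformity in $J$. An almost complex structure need not be $g$-orthogonal, so $|J|^{2}_{g}$ is not a priori bounded above, and the perturbation error in the curvature terms grows with $|J|^{2}_{g}$. The argument succeeds because the favorable term $6|J|^{2}_{g}$ on the standard sphere scales in $|J|^{2}_{g}$ exactly as the error does, so a small enough $\epsilon$ keeps the favorable term dominant for \emph{every} almost complex structure; compactness of $S^{6}$ then yields the required uniform bound after integrating against the perturbed volume form.
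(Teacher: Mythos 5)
The paper states this corollary with no proof at all, and your argument is the natural completion of Theorem 3.6 and is correct: the contradiction there rests on the pointwise strict inequality $6|J|^{2}-6\geq 30>0$ (since $J^{2}=-I$ forces $|J|^{2}\geq 6$ in any frame), and this margin survives a $C^{2}$-small change of metric. You also correctly isolate and resolve the one delicate point, namely that $|J|^{2}_{g}$ has no a priori upper bound for a non-$g$-orthogonal almost complex structure, so the $O(\epsilon)|J|^{2}_{g}$ curvature error must be absorbed by the favorable term proportional to $|J|^{2}_{g}$ rather than by a fixed constant.
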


\section{Some properties of Trace}
In this section we study the trace of $A\in\Gamma(TM\otimes
TM^{*})=\Gamma(Hom(TM, TM))$.

From the proof of proposition 3.2, we know $$-S(X)=R(e_{i},
X)Ae_{i}-AR(e_{i}, X)e_{i}.$$ And\begin{eqnarray*}
<(\nabla^{2}A)(e_{i}),e_{i}> & = & <(\nabla_{e_{k}}\nabla_{e_{k}}A)e_{i},e_{i}>\\
                             & = & <\nabla_{e_{k}}\nabla_{e_{k}}(Ae_{i}),e_{i}>\\
                             & = & e_{k}<\nabla_{e_{k}}(Ae_{i}), e_{i}>\\
                             & = &e_{k}e_{k}<Ae_{i},e_{i}>\\
                             & = &\Delta Trace(A),\\
 \end{eqnarray*}
where choosing the normal frame field. Recall that $$Trace \Delta
A=<(\Delta A)e_{i}, e_{i}>.$$

Hence we have
$$Trace \Delta A+\Delta Trace(A)=<AR(e_{i},
e_{j})e_{i},e_{j}> -<R(e_{i}, e_{j})Ae_{i}, e_{j}>.$$ The curvature
term is
\begin{eqnarray*}
<AR(e_{i}, e_{j})e_{i},e_{j}> -<R(e_{i}, e_{j})Ae_{i}, e_{j}> & = & R^{k}_{iji}<Ae_{k}, e_{j}>-<R^{l}_{ijk}A^{k}_{i}e_{l}, e_{j}>\\
                                                              & = & R^{k}_{iji}A^{j}_{k}-R^{j}_{ijk}A^{k}_{i}\\
                                                              & = & R_{ijik}A^{j}_{k}-R_{ijkj}A^{k}_{i}\\
                                                              & = & R_{jijk}A^{i}_{k}-R_{ijkj}A^{k}_{i}\\
                                                              & = & R_{ijkj}(A^{i}_{k}-A^{k}_{i})\\
                                                              & = & 0.\\
 \end{eqnarray*}

Thus we get

\begin{theorem}
$Trace \Delta A+\Delta Trace(A)=0$.
\end{theorem}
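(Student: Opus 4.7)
The plan is to apply the Weitzenb\"{o}ck formula of Proposition 3.1 to $A$, regarded as a tangent bundle-valued $1$-form, and then take the trace of both sides of $\Delta A = -\nabla^{2} A + S$. The curvature term $S(X)$ is computed exactly as in the proof of Proposition 3.2: the identity $-S(X) = R(e_{i},X)Je_{i} - JR(e_{i},X)e_{i}$ derived there used only the endomorphism nature of $J$, not the condition $J^{2}=-1$, so for a general $A$ the same calculation yields $-S(X) = R(e_{i},X)Ae_{i} - AR(e_{i},X)e_{i}$.

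For the $\nabla^{2} A$ term, I would work in a normal frame $\{e_{i}\}$ at a fixed point $p$ so that $\nabla_{e_{k}}e_{j}|_{p} = 0$. Then, using compatibility of $\nabla$ with the metric together with the induced connection on $\Gamma(Hom(TM,TM))$,
\[
\langle (\nabla^{2} A)e_{i}, e_{i}\rangle = \langle \nabla_{e_{k}}\nabla_{e_{k}}(Ae_{i}), e_{i}\rangle = e_{k}e_{k}\langle Ae_{i}, e_{i}\rangle = \Delta\, Trace(A),
\]
the extra boundary terms vanishing at $p$ because all covariant derivatives of the frame do. This is the same normal-frame device already used in Proposition 3.2.

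The decisive step is the vanishing of the traced curvature term. Writing $Ae_{i} = A^{k}_{i} e_{k}$ and $R(e_{i},e_{j})e_{k} = R^{l}_{ijk}e_{l}$, the expression $\langle S(e_{j}), e_{j}\rangle$ unfolds into $R_{ijik}A^{j}_{k} - R_{ijkj}A^{k}_{i}$. Relabeling the summation indices $i\leftrightarrow j$ in the first term, and invoking the pair symmetry $R_{abcd} = R_{cdab}$ together with antisymmetry in each pair, I can rewrite the first term as $R_{ijkj}A^{i}_{k}$, so the total becomes $R_{ijkj}(A^{i}_{k} - A^{k}_{i})$. The contracted curvature $R_{ijkj}$ (summed over $j$) is symmetric in the free indices $i,k$ by pair symmetry, while $A^{i}_{k} - A^{k}_{i}$ is manifestly antisymmetric, so the full contraction vanishes.

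Assembling the pieces, $Trace(\Delta A) = -Trace(\nabla^{2} A) + \langle S(e_{j}), e_{j}\rangle = -\Delta\, Trace(A) + 0$, which is the claimed identity. The main obstacle is just the index bookkeeping in the curvature cancellation; once the symmetry of $R_{ijkj}$ in its free indices is identified, the argument reduces to the familiar principle that a symmetric tensor paired with an antisymmetric one gives zero.
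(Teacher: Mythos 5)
Your proof is correct and follows essentially the same route as the paper: apply the Weitzenb\"{o}ck formula to $A$, identify $\langle(\nabla^{2}A)e_{i},e_{i}\rangle$ with $\Delta\,Trace(A)$ in a normal frame, and kill the traced curvature term by reducing it to $R_{ijkj}(A^{i}_{k}-A^{k}_{i})$. Your explicit observation that this vanishes because the contracted curvature is symmetric in $i,k$ while $A^{i}_{k}-A^{k}_{i}$ is antisymmetric is the justification the paper leaves implicit.
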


\begin{corollary}
1) If $M$ is compact, then $\int_{M} Trace\Delta A=0$.

2) For any almost complex structure, we have $Trace \Delta J=0$.
\end{corollary}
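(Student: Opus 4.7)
The plan is to deduce both parts directly from Theorem 4.1, which gives the pointwise identity
\[
\mathrm{Trace}\,\Delta A + \Delta\,\mathrm{Trace}(A) = 0.
\]
So the only work is to dispose of the second term in each of the two settings.

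For part (1), I would integrate this identity over the compact manifold $M$. The function $\mathrm{Trace}(A)$ is a smooth scalar function on $M$, and on a compact Riemannian manifold the integral of the Hodge--Laplacian of any smooth function vanishes (by Stokes' theorem, since $\Delta f = \delta df$ is a divergence). Hence $\int_M \Delta\,\mathrm{Trace}(A)\, dv = 0$, and the identity from Theorem 4.1 forces $\int_M \mathrm{Trace}\,\Delta A\, dv = 0$.

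For part (2), I would specialize to $A = J$. The key observation is that $J$ is a pointwise endomorphism with $J^2 = -\mathrm{Id}$, so its complex eigenvalues are $\pm i$ occurring with equal multiplicity (forcing the real dimension of $M$ to be even). This means $\mathrm{Trace}(J) \equiv 0$ as a smooth function on $M$, hence $\Delta\,\mathrm{Trace}(J) = 0$, and Theorem 4.1 gives $\mathrm{Trace}\,\Delta J = 0$ pointwise, with no compactness hypothesis.

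Neither step looks genuinely hard: part (1) is a one-line application of Stokes, and part (2) is a linear-algebra observation about $J^2 = -\mathrm{Id}$. The only conceptual point worth double-checking is that the scalar Laplacian appearing inside $\Delta\,\mathrm{Trace}(A)$ in Theorem 4.1 really is the usual Hodge--Laplacian on functions (so that its integral vanishes); this is already implicit in the derivation just above the theorem, where the computation $\langle(\nabla^2 A)e_i, e_i\rangle = e_k e_k\langle Ae_i, e_i\rangle = \Delta\,\mathrm{Trace}(A)$ in a normal frame identifies the two, so no further justification is needed.
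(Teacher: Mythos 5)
Your proposal is correct and is exactly the argument the paper intends: the corollary is stated immediately after Theorem 4.1 with no separate proof, and both parts follow from the identity $\mathrm{Trace}\,\Delta A+\Delta\,\mathrm{Trace}(A)=0$ just as you say (Stokes kills $\int_M\Delta\,\mathrm{Trace}(A)$ for part (1), and $J^{2}=-\mathrm{Id}$ forces $\mathrm{Trace}(J)\equiv 0$ for part (2)). The only cosmetic caveat is that the paper's $\Delta$ on functions, per the computation $e_{k}e_{k}\langle Ae_{i},e_{i}\rangle$ in a normal frame, is the sign-opposite of the Hodge--Laplacian you name, but this is irrelevant since either convention integrates to zero on a compact manifold.
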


\bibliographystyle{amsplain}

\end{document}